\newtheorem{Theorem}{Theorem}
\newtheorem{Lemma}{Lemma}
\newtheorem{Corollary}{Corollary}
\newcommand{\rank}{\text{rank}}
\newcommand{\Spec}{\text{Spec}}
\newcommand{\dom}{\trianglelefteq}
\newcommand{\Span}{\text{span}}
\title{General Linear and Symplectic Nilpotent Orbit Varieties}
\author{Samuel Reid\thanks{This research was supported by an Undergraduate Student Research Award of the Natural
Sciences and Engineering Council of Canada. Special thanks are to my supervisor Clifton Cunningham for many insightful discussions.}}
\begin{document}

\maketitle

\begin{abstract}
The condition of nilpotency is studied in the general linear Lie algebra $\mathfrak{gl}_{n}(\mathbb{K})$ and the
symplectic Lie algebra $\mathfrak{sp}_{2m}(\mathbb{K})$ over an algebraically closed field of characteristic 0.
In particular, the conjugacy class of nilpotent matrices is described through
nilpotent orbit varieties $\mathcal{O}_{\lambda}$ and an algorithm is provided for computing the closure $\overline{\mathcal{O}_{\lambda}} \cong \Spec\left(\mathbb{K}[X]\big/J_{\lambda}\right).$
We provide new generators for the ideal $J_{\lambda}$ defining the affine variety $\overline{\mathcal{O}_{\lambda}}$
which show that the generators provided in \cite{Weyman} are not minimal. Furthermore, we conjecture the existence
of local weak N\'{e}ron models for nilpotent orbit varieties based on bounding $p$ in the polynomial ring
with p-adic integer coefficients for which the equations defining $\mathcal{O}_{\lambda}$ can embed.
\end{abstract}

\section{Introduction}
Let $\mathbb{K}$ be an algebraically closed field of characteristic zero. We are interested in geometrically describing the condition of nilpotency in the general linear Lie algebra $\mathfrak{gl}_{n}(\mathbb{K})$ through associating varieties with conjugacy classes of nilpotent elements in $\mathfrak{gl}_{n}(\mathbb{K})$. Let $X$ be an $n \times n$ matrix in the nilpotent cone or nullcone $\mathcal{N}(n):= \mathfrak{gl}_{n}^{\text{nilp}}(\mathbb{K}) = \{X \in \mathfrak{gl}_{n}(\mathbb{K}) \; | \; X^{k} = 0, \exists k \in \mathbb{N}\}$, and denote the conjugacy class (similarity class) of $X$, i.e., the orbit of $X$ under the action of conjugation, by $\mathcal{C}_{X} = \{P^{-1} X P \; | \; P \in \mathfrak{gl}_{n}(\mathbb{K})\}$. We denote the origin of the nilpotent cone by $\mathcal{N}_{0}(n):= \{x_{ij} = 0 \; | \; 1 \leq i,j \leq n\}$. By the Jordan normal form theorem, $\exists P \in \mathfrak{gl}_{n}(\mathbb{K})$ so that $Y = P^{-1} X P$ has Jordan blocks of sizes determined by an integer partition $\lambda_{Y} = [\lambda_{1},...,\lambda_{l}]$ of $n$ with $\lambda_{1} \geq \cdot\cdot\cdot \geq \lambda_{l}$. Thus, the map $\mathcal{C}_{X} \mapsto \lambda_{Y}$ is a bijection between the set of nilpotent conjugacy classes and the set of partitions of $n$. Letting $\lambda = [\lambda_{1},...,\lambda_{l}]$ and $\lambda' = [\lambda_{1}',...,\lambda_{s}']$ be partitions of the integer $n$ listed in a non-increasing sequence, the dominance order $\dom$ on the set of partitions of a positive integer $n$ is defined by $\lambda \dom \lambda'$ if $\sum_{i=1}^{k} \lambda_{i} \leq \sum_{i=1}^{k} \lambda_{i}'$ for all $k \leq \max\{l,s\}$. If $l > s$ then we add $l-s$ zeros to end of the partition $\lambda'$ and if $s>l$ then we add $s-l$ zeros to end of the partition $\lambda$ for this definition to be well-defined. Through this bijection, the dominance ordering of integer partitions partially orders the set of nilpotent conjugacy classes. The nilpotent orbit variety $\mathcal{O}_{\lambda}$ associated with the nilpotent conjugacy class in bijection with the partition $\lambda$ is shown to be given by exact conditions on ranks of powers of matrices, where $\mathbb{K}[X]:=\mathbb{K}[x_{ij} \; | \; 1 \leq i,j \leq n]$. Thus,
$$\mathcal{O}_{\lambda} = \{f \in \mathbb{K}[X] \; | \; \rank(X^{k}) = r, \forall (k,r) \in U_{\lambda}\}$$
with $$U_{\lambda} = \left\{(k,r) \; | \; 1 \leq k \leq \max\{\lambda_{1},...,\lambda_{l}\}, r = \sum_{i=1}^{l} f^{k}(\lambda_{i})\right\}$$ and the rank counting function $f$ defined by $$f(x)=
\begin{cases}
x-1 \;\;\;\; \text{if} \; x > 0 \\
0 \;\;\;\;\;\;\;\;\;\; \text{if} \; x \leq 0
\end{cases}$$
We remark that $\rank(X^{k}) = \sum_{i=1}^{l} f^{k}(\lambda_{i})$ because the entries in the first upper diagonal of $X$ in Jordan normal form pass to the second upper diagonal of $X^2$ and so on until the nilpotency of $X$ ends this marching of the entries away from the main diagonal. From this observation, the non-zero entries in the Jordan blocks of $X$ are then naturally kept track of by powers of the rank counting function.

We have that the Zariski closure of a nilpotent orbit variety $\overline{\mathcal{O}_{\lambda}}$ associated with the nilpotent conjugacy class in bijection with the partition $\lambda$ is defined by upper bounds on ranks of powers of matrices. Thus,
$$\overline{\mathcal{O}_{\lambda}} = \{f \in \mathbb{K}[X] \; | \; \rank(X^{k}) \leq r, \forall (k,r) \in U_{\lambda}\}$$
Using the dominance ordering of integer partitions and thus nilpotent orbit varieties, we express the closure of a nilpotent orbit variety in terms of nilpotent orbit varieties by
$$\overline{\mathcal{O}_{\lambda}} = \mathcal{O}_{\lambda} \cup \left(\bigcup_{\mu \triangleleft \lambda} \mathcal{O}_{\mu}\right).$$
We can visualize the nilpotent cone $\mathcal{N}(n)$ as the union of all nilpotent orbit varieties as seen in Figure \ref{nullcone}.

 \begin{figure}[h!]\label{nullcone}
 \begin{center}
 \includegraphics[scale=0.3]{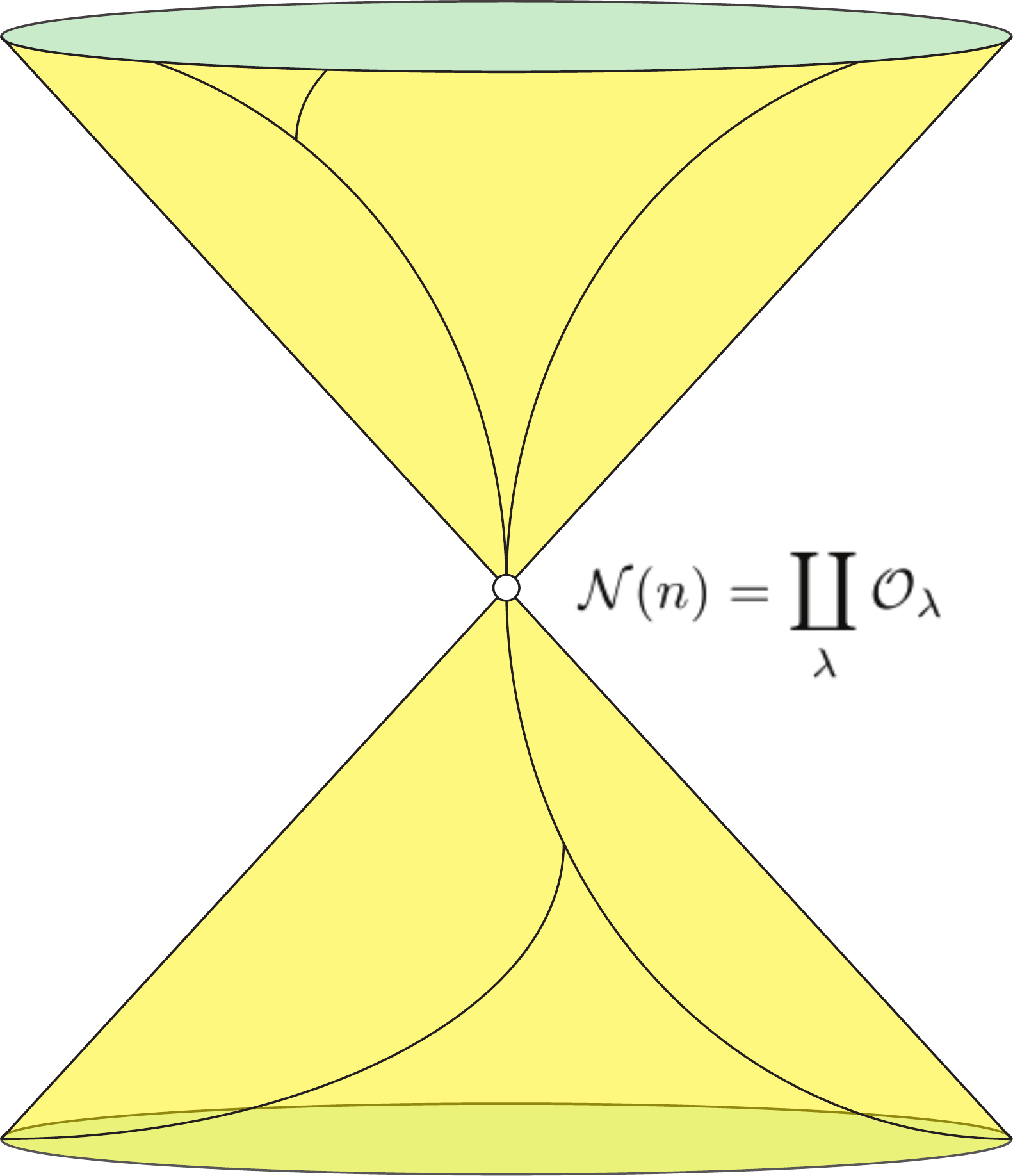}
 \end{center}
 \caption{A representation of the nilpotent cone with each region denoting a unique nilpotent orbit variety.}
 \end{figure}

\section{Nilpotent Orbit Varieties and Ideal Generators}
Since $\overline{\mathcal{O}_{\lambda}}$ is an affine variety, it is defined by an ideal $J_{\lambda}$ associated with the partition $\lambda$ by
$\overline{\mathcal{O}_{\lambda}} \cong \Spec\left(\mathbb{K}[X]\big/J_{\lambda}\right).$
We use a more recent rephrasing of Theorem 4.6 of \cite{Weyman} given by Theorem 5.4.3 of \cite{KLMW} regarding the generators of $J_{\lambda}$ and state

\begin{Theorem}\label{generatorthm}
The ideal $J_{\lambda}$ is generated by $V_{0,p}(1\leq p \leq n)$ and $V_{i,\lambda(i)}(1 \leq i \leq n)$ where $\lambda(i) = \lambda_{1} + \cdot\cdot\cdot + \lambda_{i} - i + 1$ and $V_{i,p}$ is defined as a span of linear combinations
$$V_{i,p} := \Span\left\{\sum_{|J|=p-i} X(P,J|Q,J) \; \bigg| \; P,Q \subset \{1,...,n\}, |P|=|Q|=i, (P \cup Q) \cap J = \emptyset \right\}$$
where $X(P|Q)$ denotes the minor of $X \in \mathfrak{gl}_{n}(\mathbb{K})$ with rows indexed by $P$ and columns indexed by $Q$.
\end{Theorem}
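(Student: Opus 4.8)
\medskip
\noindent\textbf{Proof plan.}
The plan is to split the statement into a set-theoretic part and a scheme-theoretic (radical) part, establish the former from the rank description recorded in the Introduction together with Cauchy--Binet, and reduce the latter to the known normality of nilpotent orbit closures.

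First I would check that the listed elements lie in $J_\lambda$. For $V_{0,p}$ with $1\le p\le n$ this is immediate: $\sum_{|J|=p}X(J|J)$ is, up to sign, the coefficient of $t^{n-p}$ in the characteristic polynomial of $X$, and this vanishes identically on the nilpotent cone, hence on $\overline{\mathcal{O}_\lambda}$. For the $V_{i,\lambda(i)}$ I would first note that each subspace $V_{i,p}$ is stable under the conjugation action of $GL_n$ --- this is what makes $J_\lambda$ conjugation-invariant --- so it suffices to evaluate the spanning sums on one Jordan-form representative of $\mathcal{O}_\mu$ for every $\mu\dom\lambda$; using $\rank(X^k)=\sum_i f^k(\lambda_i)$ one checks that the size $\lambda(i)=\lambda_1+\cdots+\lambda_i-i+1$ is chosen precisely so that every minor occurring in $V_{i,\lambda(i)}$ has size exceeding the rank of the power of $X$ whose vanishing it is meant to encode, and hence vanishes on $\overline{\mathcal{O}_\lambda}$.

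Conversely I would show that the common zero locus of the listed elements is contained in $\overline{\mathcal{O}_\lambda}$. Vanishing of $V_{0,1},\dots,V_{0,n}$ forces the characteristic polynomial of $X$ to be $t^n$, so $X$ is nilpotent; it then remains to deduce $\rank(X^k)\le r$ for all $(k,r)\in U_\lambda$ from the vanishing of the $V_{i,\lambda(i)}$, after which $X\in\overline{\mathcal{O}_\lambda}$ by the description in the Introduction. The tool is Cauchy--Binet, which expands a minor of $X^k$ as a sum of products of $k$ minors of $X$ and, run in the other direction, identifies the sums spanning $V_{i,p}$ with the combinations of minors of $X$ that, for nilpotent $X$, control the rank sequence $k\mapsto\rank(X^k)$. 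The combinatorial core is a bijection between the corners of the Young diagram of $\lambda$, the pairs $(k,r)\in U_\lambda$ that are not consequences of the others, and the indices $i$ for which $V_{i,\lambda(i)}$ is non-vacuous; given this bijection, nilpotency lets one interpolate the remaining rank inequalities from the corner ones.

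These steps give $\sqrt{(V_{0,p},\,V_{i,\lambda(i)})}=J_\lambda$, so the assertion reduces to showing this ideal is radical. Since $\overline{\mathcal{O}_\lambda}$ is the closure of a single orbit it is irreducible, and orbit closures in the nilpotent cone of $\mathfrak{gl}_n$ are normal (Kraft--Procesi), hence $J_\lambda$ is prime; so it is enough to prove $\mathbb{K}[X]/(V_{0,p},\,V_{i,\lambda(i)})$ is reduced. I would do this by a straightening law, expressing an arbitrary product of minors modulo the ideal in terms of standard ones indexed by semistandard tableaux compatible with $\lambda$ and then matching the resulting Hilbert series against that of $\mathbb{K}[\overline{\mathcal{O}_\lambda}]$, which is available from the $GL_n$-equivariant description of the coordinate ring; alternatively one follows Weyman's geometric technique, resolving $\overline{\mathcal{O}_\lambda}$ by the collapsing of the cotangent bundle of a suitable partial flag variety, pushing down the Koszul complex of the conormal bundle of the incidence variety, and extracting a minimal free resolution of $\mathbb{K}[\overline{\mathcal{O}_\lambda}]$ whose first term exhibits the $V$'s as a generating set. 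This radicality step is the main obstacle: it is routine that the listed elements belong to $J_\lambda$ and only mildly combinatorial that they define the correct variety, but showing that they generate the whole prime ideal --- rather than an ideal with the same radical --- and that no minors of powers of $X$ beyond the thresholds $p=\lambda(i)$ (and $p\le n$ for $i=0$) are needed, is exactly the content that requires representation-theoretic input.
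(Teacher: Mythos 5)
The paper offers no argument for this theorem: its ``proof'' is a citation to Theorem 4.6 of Weyman's paper, read through the reformulation of $V_{i,p}$ on page 30 of Kreiman--Lakshmibai--Magyar--Weyman, with a one-sentence r\'esum\'e of Weyman's tools (Lascoux resolutions, Schur functors, spectral sequences, induction on partition length). Your proposal, by contrast, actually attempts a proof, and its decomposition into (i) the listed elements lie in $J_\lambda$, (ii) their common zero locus is $\overline{\mathcal{O}_\lambda}$, and (iii) the ideal they generate is radical hence all of $J_\lambda$, is a sensible scaffolding that the paper does not provide. Step (i) is sound: $V_{0,p}$ really is, up to sign, a characteristic-polynomial coefficient, and the $GL_n$-equivariance of each $V_{i,p}$ (it is the image of $\bigwedge^i V^*\otimes\bigwedge^i V$ under an equivariant map into $\mathrm{Sym}^p(\mathfrak{gl}_n^*)$) legitimately reduces (ii) to an evaluation on Jordan representatives. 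But I would push back on ``only mildly combinatorial'': even granting equivariance, verifying that the specific linear combinations $\sum_J X(P,J\mid Q,J)$ at the thresholds $p=\lambda(i)$ cut out exactly the correct union of orbit closures is closer to a De Concini--Procesi/Tanisaki-type statement than a routine check, and the Cauchy--Binet bridge you invoke between minors of $X^k$ and the $V_{i,p}$ (which are combinations of minors of $X$, not of $X^k$) is asserted rather than explained. One small slip: primality of $J_\lambda$ follows from irreducibility of $\overline{\mathcal{O}_\lambda}$ alone (it is the closure of a single orbit of a connected group); the normality result of Kraft--Procesi is true but is not what gives you ``prime.'' Finally, your step (iii) is correctly flagged as the crux, and your second option there --- pushing down the Koszul complex of the conormal bundle of the incidence variety over a partial flag variety --- is precisely Weyman's geometric technique, i.e.\ the same machinery the paper is outsourcing to. Your first option (a straightening law plus Hilbert-series comparison) is closer in spirit to Shimozono--Weyman's basis results, but I am not aware of it having been carried out for this particular reduced generating set, so as written it is a hope rather than an argument. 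Net: your route is more explicit about what is being claimed and where the difficulty sits, which is genuinely more informative than the paper's citation, but it bottoms out at the same representation-theoretic input and leaves the set-theoretic and Cauchy--Binet steps unverified.
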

\begin{proof}
This is a restatement of Theorem 4.6 of \cite{Weyman} using the alternative definition of $V_{i,p}$ given on page 30 of \cite{KLMW}. In \cite{Weyman},
$$V_{i,p} \cong \bigwedge^{i}V^{*} \bigotimes \bigwedge^{i} V$$ with elements given from a basis $e_{1},...,e_{n}$ of the vector space $V$ by $e_{p_{1}}^{*} \wedge e_{p_{2}}^{*} \wedge \cdot\cdot\cdot \wedge e_{p_i}^{*} \otimes e_{q_{1}} \wedge e_{q_{2}} \wedge \cdot\cdot\cdot \wedge e_{q_{i}}$. Whereas, in \cite{KLMW}
$$V_{i,p} = \Span\left\{\sum_{|J|=p-i} X(P,J|Q,J) \; \bigg| \; P,Q,J \subset \{1,...,n\}, |P|=|Q|=i, (P \cup Q) \cap J = \emptyset \right\}$$
with the proof that $J_{\lambda}$ is generated by $V_{0,p}\text{(}1\leq p \leq n\text{)}$ and $V_{i,\lambda(i)}\text{(}1 \leq i \leq n\text{)}$ given in \cite{Weyman} using Lascoux resolution of complexes, Schur functors used to define irreducible representations of $\mathfrak{gl}_{n}$, spectral sequences of filtrations, and induction on the length of the partition.
\end{proof}

In order to recover the nilpotent orbit variety $\mathcal{O}_{\lambda}$ from the closure $\overline{\mathcal{O}_{\lambda}}$, we construct the set $$H_{\lambda} = \{h \in \mathbb{K}[X] \; | \; \rank(X^{k}) \geq r, \forall (k,r) \in U_{\lambda}\}$$ and use localization. Since $\rank(X^{k}) \geq r$ is guaranteed by the existence of an $r \times r$ minor of $X$ with non-zero determinant, we construct another set $$H_{\lambda}^{k} = \left\{X(P|Q) \neq 0 \; \big| \; P,Q \subseteq \{1,...,n\},|P|=|Q|=r_{k}:=\rank(X^k)=\sum_{i=1}^{l} f^{k}(\lambda_{i})\right\}$$
which indexes the $r_{k} \times r_{k}$ minors of $X^{k}$. Then since there are ${n \choose r}^2$ minors of $X$ with size $r \times r$,
$$H_{\lambda} = \bigcup_{k=1}^{\max\{\lambda\}} H_{\lambda}^{k} = \left\{h_{j,k} \in H_{\lambda}^{k} \; \big| \; 1 \leq k \leq \max\{\lambda\},1 \leq j \leq {n \choose r_{k}}^{2}\right\}$$
where $\max\{\lambda\} = \max\{\lambda_{1},...,\lambda_{l}\}$. We now take unions of localizations of nilpotent orbit variety closures by $h_{j,k} \in H_{\lambda}$ and obtain
\begin{align*}
\mathcal{O}_{\lambda} &= \bigcup_{h \in H_{\lambda}} \left(\overline{\mathcal{O}_{\lambda}}\right)_{h} = \bigcup_{k=1}^{\max\{\lambda\}} \bigcup_{j=1}^{{n \choose r_{k}}^2} \left(\overline{\mathcal{O}_{\lambda}}\right)_{h_{j,k}} \cong \bigcup_{k=1}^{\max\{\lambda\}} \bigcup_{j=1}^{{n \choose r_{k}}^2} \left(\Spec\left(\left(\mathbb{K}[X]\big/J_{\lambda}\right)\right)\right)_{h_{j,k}} \\
&\cong \bigcup_{k=1}^{\max\{\lambda\}} \bigcup_{j=1}^{{n \choose r_{k}}^2} \Spec\left(\left(\mathbb{K}[X]\big/J_{\lambda}\right)_{h_{j,k}}\right) \cong \bigcup_{k=1}^{\max\{\lambda\}} \bigcup_{j=1}^{{n \choose r_{k}}^2} \Spec\left(\frac{\mathbb{K}[X,t]}{J_{\lambda}\langle h_{j,k} t - 1 \rangle}\right)
\end{align*}
where $\left(\cdot\right)_{h}$ denotes localization at $h$. We remark that the transition maps for this atlas are induced by the isomorphism $$\left(\Spec\left(\mathbb{K}[X]\big/J_{\lambda}\right)\right)_{hh'} \cong \left(\Spec\left(\mathbb{K}[X]\big/J_{\lambda}\right)\right)_{h'h}$$
where $h,h' \in H_{\lambda}$.

\section{Computing Nilpotent Orbits in $\mathfrak{gl}_{n}$}
To gain some intuition for what $V_{i,p}$ represents in the formulation in \cite{Weyman} and in \cite{KLMW} we present an example which illustrates both. We first remark that the condition that $(P \cup Q) \cap J = \emptyset$ ensures that the minor $X(P,J | Q,J)$ is square and thus has a well-defined determinant. With this in mind, we compute the nilpotent orbit variety $\mathcal{O}_{[2,1]}$ in $\mathcal{N}(3):= \mathfrak{gl}_{3}^{\text{nilp}}(\mathbb{K})$ using a simple construction which yields generators for $J_{[2,1]}$ which are more minimal than in Theorem \ref{generatorthm} before presenting this case in the harder to understand language of $V_{i,p}$'s. We conjecture that for small values of $n$ the generators presented in our algorithm are less minimal than those constructed by Weyman.

We begin with the bijection between integer partitions and nilpotent orbit varieties,
$$
[2,1] \mapsto \mathcal{O}_{[2,1]} = \left\{f \in \mathbb{K}[X] \; | \; \rank(X) = 1, X^2 =0 \right\} \ni
\left[
\begin{array}{c c c}
0 & 1 & 0 \\
0 & 0 & 0 \\
0 & 0 & 0
\end{array} \right]$$
where $U_{[2,1]} = \{(1,1),(2,0)\}$. We now compute the nilpotent orbit variety closure $\overline{\mathcal{O}_{[2,1]}}$ by using a lemma which upper bounds the rank of a matrix by conditions on the determinants of minors of the matrix.

\begin{Lemma}\label{minorslemma}
If $X \in \mathfrak{gl}_{n}$ and $\det(M) = 0$ for every $(r+1)\times(r+1)$ minor $M$ of $X$, then $\rank(X) \leq r$. That is, if $X(P,Q) = 0$ for every $P,Q \subset \{1,...,n\}$ with $|P|=|Q|=r+1$, then $\rank(X) \leq r$.
\end{Lemma}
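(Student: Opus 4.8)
The plan is to prove the contrapositive: if $\rank(X) \geq r+1$, then $X$ has a nonvanishing $(r+1)\times(r+1)$ minor. The only input needed is the standard linear-algebra fact that the rank of a matrix over a field equals the maximal size of a linearly independent set of columns, equivalently of rows, together with the observation that a square matrix has nonzero determinant precisely when its columns (equivalently its rows) are linearly independent.

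Concretely, I would argue as follows. Suppose $\rank(X) = s \geq r+1$. First, choose $s$ linearly independent columns of $X$, indexed by a set $Q_{0}$ with $|Q_{0}| = s$, and fix any subset $Q \subseteq Q_{0}$ with $|Q| = r+1$; the tall $n \times (r+1)$ submatrix formed by these columns still has rank $r+1$, since its columns form a subfamily of a linearly independent family. Second, apply the same reasoning to the rows of this submatrix: having rank $r+1$, it possesses $r+1$ linearly independent rows, indexed by some $P$ with $|P| = r+1$. The resulting square matrix $X(P,Q)$ then has full rank $r+1$, so $\det X(P,Q) \neq 0$, contradicting the hypothesis. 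This forces $\rank(X) \leq r$. Note that here $P$ and $Q$ range over all index sets of the appropriate size with no disjointness constraint, which is consistent with the statement: we are dealing with ordinary minors, not the shifted minors $X(P,J \mid Q,J)$ of Theorem \ref{generatorthm}.

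I expect no genuine obstacle, as this is the classical ``rank equals the size of the largest nonvanishing minor'' fact; the only point demanding a line of justification is the two-stage extraction — first deleting columns, then deleting rows — while preserving rank $r+1$, which follows from the elementary fact that deleting columns cannot drop the rank below the number of remaining independent rows. As an alternative phrasing more in keeping with the $\bigwedge^{i}V^{*} \otimes \bigwedge^{i}V$ picture used above, one may observe that the induced map $\bigwedge^{r+1} X \colon \bigwedge^{r+1}\mathbb{K}^{n} \to \bigwedge^{r+1}\mathbb{K}^{n}$ has matrix entries equal to the $(r+1)\times(r+1)$ minors of $X$, so the vanishing of all such minors means $\bigwedge^{r+1} X = 0$, which immediately gives $\rank(X) \leq r$.
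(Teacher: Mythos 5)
Your proof is correct and rests on the same classical fact the paper invokes — that rank equals the size of the largest nonvanishing square submatrix — but you actually establish it via the two-stage column/row extraction, whereas the paper's proof simply cites this equivalence in one line without argument. Your remark that no disjointness constraint applies here, and your alternative $\bigwedge^{r+1}X = 0$ formulation, are both sound and do not change the underlying approach.
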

\begin{proof}
The rank of a matrix can be equivalently defined as the dimension of the largest minor whose determinant is not zero. Hence, if the determinant of every $(r+1)\times(r+1)$ minor of $X$ is zero then $\rank(X) \leq r$.
\end{proof}

From computing nilpotent orbit variety closures we can recover the nilpotent orbit variety in this case by using
$$\overline{\mathcal{O}_{[2,1]}} = \mathcal{O}_{[2,1]} \cup \mathcal{O}_{[1,1,1]}$$
since $\mathcal{O}_{[1,1,1]} = \{f \in \mathbb{K}[X] \; | \; X = 0\} = \{x_{11} =0,...,x_{33}=0\} = \mathcal{N}_{0}(3)$. Now, $$\overline{\mathcal{O}_{[2,1]}} = \left\{f \in \mathbb{K}[X] \; | \; \rank(X) \leq 1, X^2 =0 \right\}$$
we have that $\rank(X) \leq 1$ is satisfied when every $2 \times 2$ minor of $X$ has determinant zero and that $X^2 = 0$ is satisfied when every $1 \times 1$ minor of $X^2$ has determinant zero, that is, when each entry of $X^2$ is zero. Thus,
\begin{align*}
\overline{\mathcal{O}_{[2,1]}} = \{
&x_{12}x_{33} - x_{13}x_{32},x_{11}x_{32}-x_{12}x_{31},x_{11}x_{22}-x_{12}x_{21},x_{12}x_{23}-x_{13}x_{22},x_{21}x_{32} - x_{22}x_{31}, \\
&x_{22}x_{33}-x_{23}x_{32},x_{11}x_{23}-x_{13}x_{21},x_{21}x_{33}-x_{23}x_{31},x_{11}x_{33}-x_{13}x_{31}, x_{11}^2 + x_{12}x_{21} + x_{13}x_{31}, \\
&x_{11}x_{12} + x_{12}x_{22} + x_{13}x_{33},x_{21}x_{11} + x_{22}x_{21} + x_{23}x_{31}, x_{21}x_{11} + x_{22}x_{21} + x_{23}x_{31}, \\ &x_{21}x_{12} + x_{22}^2 + x_{23}x_{32}, x_{21}x_{13} + x_{22}x_{23}+x_{23}x_{33}, x_{31}x_{11}+x_{32}x_{21}+x_{33}x_{31}, \\
&x_{31}x_{12}+x_{32}x_{22}+x_{33}x_{32},x_{31}x_{13}+x_{32}x_{23}+x_{33}^2\}
\end{align*}
which is a system of 18 polynomial equations in $\mathbb{K}[x_{11},x_{12},x_{13},x_{21},x_{22},x_{23},x_{31},x_{32},x_{33}]$. We then have that $\mathcal{O}_{[2,1]} = \overline{\mathcal{O}_{[2,1]}} \backslash \mathcal{N}_{0}(3)$, where $\mathcal{N}_{0}(3)$ denotes the origin of the nilpotent cone in $\mathfrak{gl}_{3}$.
In general, we refer to Algorithm 1 for computing nilpotent orbit variety closures in terms of $\lambda$.

\begin{algorithm}
\caption{$\mathfrak{gl}_{n}$ Nilpotent Orbit Variety Closure}
\begin{algorithmic}
\REQUIRE $\lambda = [\lambda_{1},...,\lambda_{l}]$, where $\sum_{i=1}^{l} \lambda_{i} = n$ and $\lambda_{i} \in \mathbb{N}$, $\forall i \in \{1,...,l\}$.
\STATE Set $\overline{\mathcal{O}_{\lambda}} = \emptyset$.
\FORALL{$k \in \{1,...,n\}$}
\STATE Set $r = \rank(X^{k}) = \sum_{i=1}^{l} f^{k}(\lambda_{i})$
\IF{$r \geq 0$}
\FORALL{$P,Q \subset \{1,...,n\}$}
\IF{$|P|=|Q|=r+1$}
\STATE Set $\overline{\mathcal{O}_{\lambda}} = \overline{\mathcal{O}_{\lambda}} \cup \{X^{k}(P|Q) = 0\}$.
\ENDIF
\ENDFOR
\ENDIF
\ENDFOR
\RETURN $\overline{\mathcal{O}_{\lambda}}$
\end{algorithmic}
\end{algorithm}

\pagebreak

In the formalism presented by Weyman we have that
$$\overline{\mathcal{O}_{[2,1]}} \cong \Spec\left(\mathbb{K}[X]\big/J_{[2,1]}\right)$$
where $J_{[2,1]} = \langle V_{0,1},V_{0,2},V_{0,3},V_{1,2},V_{2,2},V_{3,1}\rangle$, which as we will see reduces to $\langle V_{0,1},V_{0,2},V_{0,3},V_{1,2},V_{2,2}\rangle$ since $V_{i,p}$ is trivial for $i > p$. The function $\lambda(i) = \lambda_{1} + \cdot\cdot\cdot \lambda_{i} - i + 1$ is used to apply Theorem \ref{generatorthm} to this example as follows. For the partition $\lambda = [2,1]$, we append $i - |\lambda|$ additional zeroes if required to define $V_{i,p}$ for a specific $p = \lambda(i)$. In this case we have $\lambda(1)=2, \lambda(2)=2$, and $\lambda(3) = 1$ are the values of $p$ for each non-zero $i$. Then,
\begin{align*}
V_{0,1} &= \Span\left\{\sum_{|J|=1} X(J|J) \; \big| \; J \subset \{1,2,3\} \right\} = \{x_{11} + x_{22} + x_{33}\} \\
V_{0,2} &= \Span\left\{\sum_{|J|=2} X(J|J) \; \big| \; J \subset \{1,2,3\} \right\} = \Span\{X(1,2|1,2) + X(2,3|2,3) + X(1,3|1,3)\} \\
&= \{x_{11}x_{22} -x_{21}x_{12} + x_{22}x_{33}-x_{23}x_{32} + x_{11}x_{33} - x_{13}x_{31}\} \\
V_{0,3} &= \Span\left\{\sum_{|J|=3} X(J|J) \; \big| \; J \subseteq \{1,2,3\}\right\} = \{\det(X)\} \\
V_{1,2} &= \Span\left\{\sum_{|J|=1} X(P,J|Q,J) \; \big| \; P,Q \subset \{1,2,3\}, |P|=|Q|=1, (P \cup Q) \cap J = \emptyset\right\} \\
&= \Span\{X(2,1|2,1) + X(3,1|3,1) + X(2,1|3,1) + X(3,1|2,1), \\
&\;\;\;\;\;\;\;\;\;\;\;\;\;\;X(1,2|1,2)+X(3,2|3,2) + X(1,2|3,2) + X(3,2|1,2), \\
&\;\;\;\;\;\;\;\;\;\;\;\;\;\;X(1,3|1,3) + X(2,3|2,3) + X(1,3|2,3) + X(2,3|1,3)\} \\
V_{2,2} &= \Span\left\{X(P|Q) \; \big| \; P,Q \subset \{1,2,3\}, |P|=|Q|=2\right\} \\
&=\Span\{X(1,2|1,2),X(1,3|1,3),X(2,3|2,3),X(1,2|1,3),X(1,2|2,3),X(1,3|2,3),X(1,3|1,2),X(2,3|1,3),\\
&\;\;\;\;\;\;\;\;\;\;\;\;\;\;X(2,3|1,2),X(1,3|1,2),X(2,3|1,2),X(2,3|1,3),X(1,2|1,3),X(1,3|2,3),X(1,2|2,3)\}
\end{align*}
It is difficult to find reductions in the span of a system of equations as opposed to the direct computation provided by Algorithm 1. Thus, linear hulls of subsets of 21 polynomial equations generate $J_{[2,1]}$.

\section{Computing Nilpotent Orbits in $\mathfrak{sp}_{2m}$}
A symplectic matrix is a $2m \times 2m$ matrix $M$ with entries from $\mathbb{K}$ which satisfies $M^{T} \Omega M = \Omega$, where $\Omega$ is a fixed $2m \times 2m$ invertible (nonsingular) and skew-symmetric ($M^{T} = -M$) matrix, where typically
$$\Omega = \left[\begin{array}{ccccccc}
0 & 0 & 0 & 0 & 0 & 1 \\
0 & 0 & 0 & 0 &-1 & 0 \\   
  &   &\vdots & & &   \\
0 & 1 & 0 & 0 & 0 & 0 \\
-1& 0 & 0 & 0 & 0 & 0
\end{array}\right]$$
The symplectic group of degree $2m$ over a field $\mathbb{K}$ is denoted by $Sp(2m,\mathbb{K})$ and is the group of all symplectic matrices with matrix multiplication as the group operation. The symplectic Lie algebra $\mathfrak{sp}_{2m}$ is the Lie algebra of the Lie group $Sp(2m,\mathbb{K})$ and is the set of all matrices $M$ such that $e^{tM} \in Sp(2m,\mathbb{K})$. Equivalently, $\mathfrak{sp}_{2m}$ can be thought of as the tangent space to $Sp(2m,\mathbb{K})$ at the identity. We now want to compute nilpotent orbit varieties in $\mathfrak{sp}_{2m}$, which can be indexed by partitions of $2m$ for which each odd integer appears with even multiplicity due to a theorem of Gerstenhaber presented in Section 5.1 of \cite{colling}.

As lie algebras, we have $\mathfrak{sp}_{2m}$ is a subalgebra of $\mathfrak{gl}_{2m}$ and as such we can consider intersections of nilpotent orbits $\mathcal{O}_{\lambda}$ in $\mathfrak{gl}_{2m}$ with nilpotent orbits $\mathcal{O}_{\lambda}^{\mathfrak{sp}}$ in $\mathfrak{sp}_{2m}$ occurring inside the nilpotent cone $\mathcal{N}(2m)$. We now characterize the conditions of nilpotency in symplectic lie algebras by requiring the symplectic condition $X^{T} \Omega + \Omega X = 0$ along with a partition for which Gerstenhaber's theorem holds. Consider an arbitrary integer partition $\lambda = [\lambda_{1},...,\lambda_{l}]$ with $2m = \sum_{i=1}^{l} \lambda_{i}$. We have that
$$\overline{\mathcal{O}_{\lambda}} \cap \mathfrak{sp}_{2m} = \overline{\mathcal{O}_{\lambda}^{\mathfrak{sp}}}$$
and so we compute nilpotent orbit variety closures in the symplectic lie algebra $\mathfrak{sp}_{2m}$ by requiring that the symplectic condition holds:
\begin{Lemma}
Let $X \in \mathbb{K}[x_{ij} \; | \; 1 \leq i,j \leq 2m]$. Then $X$ is symplectic when $X^{T} \Omega X = \Omega$, which is when the equations in the following sets are satisfied.
\begin{align*}
\Lambda_{2m}^{\mathfrak{sp}}(2q+1,n-2q) &= \left\{1 + \sum_{k=1}^{2m} (-1)^{k} x_{2m+1-k,i}x_{k,j} = 0 \; \big| \; i=2q+1, j =n-2q, q \in \mathbb{N}, q < m \right\} \\
\Lambda_{2m}^{\mathfrak{sp}}(2q,n - 2q + 1) &= \left\{1 + \sum_{k=1}^{2m} (-1)^{k+1} x_{2m+1-k,i}x_{k,j} = 0 \; \big| \; i =2q, j = n-2q+1, q \in \mathbb{N}, q < m \right\} \\
\Lambda_{2m}^{\mathfrak{sp}}(r,s) &= \bigg\{\sum_{k=1}^{2m} (-1)^{k} x_{2m+1-k,i}x_{k,j} = 0 \; \big| \; \neg \exists q \in \mathbb{N}, (i=r=2q+1 \wedge j=s= n-2q) \\
&\vee (i=r =2q \wedge j=s =n-2q+1), 1 \leq r,s \leq 2m\bigg\}
\end{align*}
Furthermore, $|\Lambda_{2m}^{\mathfrak{sp}}(2q+1,n-2q)|=|\Lambda_{2m}^{\mathfrak{sp}}(2q,n - 2q + 1)|=m$ and $|\Lambda_{2m}^{\mathfrak{sp}}(r,s)| = 4m^2 -2m$.
\end{Lemma}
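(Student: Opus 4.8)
The plan is to expand the single matrix equation $X^{T}\Omega X = \Omega$ into its $(2m)^{2}$ scalar entries, recognize each entry equation as exactly one member of one of the three sets $\Lambda_{2m}^{\mathfrak{sp}}$, and then count. The structural fact that makes this tractable is the explicit anti-diagonal shape of $\Omega$: with the displayed convention one has $\Omega_{k,l} = (-1)^{k+1}\delta_{l,\,2m+1-k}$, so every row of $\Omega$ has a single nonzero entry and the matrix product collapses to a single sum. Concretely I would first compute $(\Omega X)_{l,j} = (-1)^{l+1}x_{2m+1-l,\,j}$, hence
$$(X^{T}\Omega X)_{i,j} \;=\; \sum_{l=1}^{2m} x_{l,i}\,(\Omega X)_{l,j} \;=\; \sum_{l=1}^{2m} (-1)^{l+1}\, x_{l,i}\, x_{2m+1-l,\,j},$$
and then substitute $k = 2m+1-l$ (a bijection of $\{1,\dots,2m\}$ onto itself) together with $(-1)^{l+1} = (-1)^{k}$ to rewrite this as $\sum_{k=1}^{2m}(-1)^{k}x_{2m+1-k,\,i}\,x_{k,\,j}$, which is precisely the quadratic form that appears in all three families.

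Next I would read off the right-hand side. Since $\Omega_{i,j} = (-1)^{i+1}$ when $j = 2m+1-i$ and $\Omega_{i,j}=0$ otherwise, the equation $(X^{T}\Omega X)_{i,j} = \Omega_{i,j}$ splits according to the location of $(i,j)$ relative to the anti-diagonal $\{\,i+j = 2m+1\,\}$, i.e.\ $j = n-i+1$; this is exactly the dichotomy recorded by the labels ($i=2q+1,\ j=n-2q$ or $i=2q,\ j=n-2q+1$ for the anti-diagonal positions, a generic $(r,s)$ otherwise). Off the anti-diagonal the equation is homogeneous, $\sum_{k}(-1)^{k}x_{2m+1-k,i}x_{k,j}=0$, and these are precisely the members of $\Lambda_{2m}^{\mathfrak{sp}}(r,s)$. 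On the anti-diagonal the constant $(-1)^{i+1}$ depends only on the parity of $i$, so moving it to the left splits the anti-diagonal equations into the two families $\Lambda_{2m}^{\mathfrak{sp}}(2q+1,n-2q)$ (for $i$ odd) and $\Lambda_{2m}^{\mathfrak{sp}}(2q,n-2q+1)$ (for $i$ even). Conversely, the equations produced this way are literally the entries of $X^{T}\Omega X - \Omega$, so their simultaneous vanishing is equivalent to $X^{T}\Omega X = \Omega$; combined with the definition of a symplectic matrix this gives the claimed equivalence.

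For the cardinalities I would argue purely combinatorially. The anti-diagonal $\{(i,\,2m+1-i) : 1\leq i\leq 2m\}$ has $2m$ points, of which exactly $m$ have $i$ odd and $m$ have $i$ even; these two halves index $\Lambda_{2m}^{\mathfrak{sp}}(2q+1,n-2q)$ and $\Lambda_{2m}^{\mathfrak{sp}}(2q,n-2q+1)$ respectively, so each of the first two sets has $m$ elements. The remaining index pairs number $(2m)^{2}-2m = 4m^{2}-2m$, which is $|\Lambda_{2m}^{\mathfrak{sp}}(r,s)|$. (If desired one can note in addition that $X^{T}\Omega X - \Omega$ is skew-symmetric, so roughly half of these equations are redundant, and that the $2m$ diagonal equations $i=j$ vanish identically once $k$ is paired with $2m+1-k$; but none of this affects the count of equations as written.)

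Everything here is either a routine matrix expansion or an elementary count; the one place that genuinely demands care is the sign and parity bookkeeping — keeping the exponent of $-1$ correct through the index reversal $k\leftrightarrow 2m+1-k$, and then tracking how the sign $(-1)^{i+1}$ of the nonzero entries of $\Omega$ distributes over the two parity classes on the anti-diagonal, since this is exactly what forces two separate anti-diagonal families and fixes the normalization of the constant term in them. I would therefore carry out that step first and let the rest follow.
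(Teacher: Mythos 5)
The paper gives no proof of this lemma at all, so there is nothing to compare your write-up against; the entrywise expansion you outline (collapse $\Omega X$ using the single nonzero entry $\Omega_{k,\,2m+1-k}=(-1)^{k+1}$ in each row, reindex $k\leftrightarrow 2m+1-k$, then read the $(i,j)$ entry of $X^{T}\Omega X-\Omega$ according to whether $(i,j)$ lies on the anti-diagonal) is exactly the right approach and is where a proof of this statement must go. The cardinality arguments via the $2m$ anti-diagonal positions split by parity of $i$ are also the natural ones.

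However, your proof is an outline rather than an execution, and the one step you explicitly flag as demanding care — the sign and parity bookkeeping — is the step you do not actually carry out, and it does not land where you assert. With the displayed $\Omega$ your own computation gives $(X^T\Omega X)_{i,j}=\sum_{k}(-1)^{k}x_{2m+1-k,i}x_{k,j}$ and $\Omega_{i,\,2m+1-i}=(-1)^{i+1}$, so for $i$ odd the anti-diagonal equation is $\sum_{k}(-1)^{k}x_{2m+1-k,i}x_{k,j}=1$, i.e. $-1+\sum_{k}(-1)^{k}x_{2m+1-k,i}x_{k,j}=0$, whereas the lemma's first family writes $+1+\sum_{k}(-1)^{k}x_{2m+1-k,i}x_{k,j}=0$; similarly the even-$i$ family's $(-1)^{k+1}$ does not arise from your expansion. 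A sanity check at $m=1$ confirms this: $X^T\Omega X=\Omega$ for a $2\times2$ matrix reduces to $ad-bc=1$, while the lemma's set $\Lambda_{2}^{\mathfrak{sp}}(1,2)$ reads $1+(ad-bc)=0$. Your sentence ``moving it to the left splits the anti-diagonal equations into the two families'' therefore conceals a genuine mismatch that you would have discovered had you completed the bookkeeping you singled out as the crux. You should also reconcile the cardinality claim $|\Lambda_{2m}^{\mathfrak{sp}}(2q,n-2q+1)|=m$ with the lemma's stated range $q<m$: with $q\geq 1$ that range yields only $m-1$ values, and you silently substitute the anti-diagonal count for the lemma's stated index set.
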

We now call $$\Lambda_{2m}^{\mathfrak{sp}} = \bigcup_{q=0}^{m-1} \Lambda_{2m}^{\mathfrak{sp}}(2q+1,n-2q) \cup \bigcup_{q=1}^{m}  \Lambda_{2m}^{\mathfrak{sp}}(2q,n - 2q + 1) \cup \bigcup_{(r,s)} \Lambda_{2m}^{\mathfrak{sp}}(r,s)$$
and note that $|\Lambda_{2m}^{\mathfrak{sp}}| = 4m^2$. We can compute nilpotent orbit varieties closures in $\mathfrak{sp}_{2m}$ with Algorithm 2.

\begin{algorithm}
\caption{$\mathfrak{sp}_{2m}$ Nilpotent Orbit Variety Closure}
\begin{algorithmic}
\REQUIRE $\lambda = [\lambda_{1},...,\lambda_{l}]$, where $\sum_{i=1}^{l} \lambda_{i} = n$ and $\lambda_{i} \in \mathbb{N}$, $\forall i \in \{1,...,l\}$.
\STATE Set $\overline{\mathcal{O}_{\lambda}} = \emptyset$.
\FORALL{$k \in \{1,...,n\}$}
\STATE Set $r = \rank(X^{k}) = \sum_{i=1}^{l} f^{k}(\lambda_{i})$
\IF{$r \geq 0$}
\FORALL{$P,Q \subset \{1,...,n\}$}
\IF{$|P|=|Q|=r+1$}
\STATE Set $\overline{\mathcal{O}_{\lambda}} = \overline{\mathcal{O}_{\lambda}} \cup \{X^{k}(P|Q) = 0\}$.
\ENDIF
\ENDFOR
\ENDIF
\ENDFOR
\STATE Set $\overline{\mathcal{O}_{\lambda}^{\mathfrak{sp}}} = \overline{\mathcal{O}_{\lambda}} \cap \Lambda_{2m}^{\mathfrak{sp}}$
\RETURN $\overline{\mathcal{O}_{\lambda}^{\mathfrak{sp}}}$
\end{algorithmic}
\end{algorithm}

For computing symplectic nilpotent orbit varieties we intersect the general linear nilpotent orbit variety with $\mathfrak{sp}_{2m}$ and obtain
$$\mathcal{O}_{\lambda}^{\mathfrak{sp}} = \bigcup_{h \in H_{\lambda}} \left(\overline{\mathcal{O}_{\lambda}^{\mathfrak{sp}}}\right)_{h}.$$

\section{N\'{e}ron Models and Future Research Directions}
Let $R$ be a Dedekind domain, that is, an integral domain in which every nonzero proper ideal factors into a product of 
prime ideals, with field of fractions $K$ and let $R_{K}$ be an abelian variety over $K$ (which is that $R_{K}$ is a 
projective algebraic variety that is also an algebraic group). A N\'{e}ron model is a universal separated smooth scheme 
$A_{R}$ over $R$ with a rational map to $A_{K}$; equivalently, N\'{e}ron models are commutative quasi-projective group 
schemes over $R$. Motivation for studying N\'{e}ron models can come from understanding good reduction of elliptic curves 
over $\mathbb{Q}$ or for understanding the Birch and Swinnerton-Dyer Conjecture which involves the Tate-Shafarevich group 
that is defined in terms of a N\'{e}ron model over $\mathbb{Z}$ for an abelian variety over $\mathbb{Q}$. For further
references regarding N\'{e}ron models, consult the seminal work \cite{neronmodel}.

We conjecture the existence of a local weak N\'{e}ron model for a nilpotent orbit variety $$\mathcal{O}_{\lambda} = \bigcup_{k=1}^{\max\{\lambda\}} \bigcup_{j=1}^{{n \choose r_{k}}^2} \Spec\left(\frac{\mathbb{K}[X,t]}{J_{\lambda}\langle h_{j,k} t - 1 \rangle}\right)$$ by considering a reduction $\mathbb{K}[X,t] \longrightarrow \mathbb{Z}_{p}[X,t]$ in the coordinate rings of each localized affine variety defined by nilpotent orbit variety closures as
$$\Spec\left(\frac{\mathbb{K}[X,t]}{J_{\lambda} \langle h_{j,k}t - 1 \rangle}\right) \longrightarrow \Spec\left(\frac{\mathbb{Z}_{p}[X,t]}{J_{\lambda} \langle h_{j,k}t - 1 \rangle}\right).$$
In order to bound the value of $p$ admissible for a given nilpotent orbit variety determined by a partition $\lambda$ of $n$, we find the maximum coefficient of the polynomials in $H_{\lambda}$ and $F_{\lambda}$ defined by
\begin{align*}
H_{\lambda} &= \bigcup_{k=1}^{\max\{\lambda\}} H_{\lambda}^{k} = \left\{h_{j,k} \in H_{\lambda}^{k} = \{X(P|Q) \neq 0 \; \big| \; P,Q \subseteq \{1,...,n\}, |P|=|Q|=r_{k}\} \; \big| \; 1 \leq j \leq {n \choose r_{k}}^2\right\} \\
F_{\lambda} &= \bigcup_{k=1}^{\max\{\lambda\}} F_{\lambda}^{k} = \left\{f_{j,k} \in F_{\lambda}^{k} = \{X^{k}(P|Q) = 0 \; \big| \; P,Q \subseteq \{1,...,n\}, |P|=|Q|=r_{k} + 1\} \; \big| \; 1 \leq j \leq {n \choose r_{k} + 1}^2\right\}
\end{align*}
We define the coefficient projection function $\pi_{r} : \mathbb{K}[X] \rightarrow \mathbb{K}$ by $\pi_{t}(g(X)) = c_{t,j,k}$, where
$$g(X) = g(x_{11},...,x_{nn}) = \sum_{t=1}^{\Omega_{g}} c_{t,j,k} \prod_{u=1}^{n} \prod_{v=1}^{n} x_{uv}^{p_{t,uv}}$$
is an arbitrary polynomial function with $c_{t,j,k} \in \mathbb{K}, p_{t,uv} \in \mathbb{N} \cup \{0\}$ and $$\Omega_{g} = \sum_{d=1}^{\deg(g)} {d + n -1 \choose n-1}$$
For indexing the variables $x_{uv}$ in the polynomial ring $\mathbb{K}[X]$, we remark that $uv$ denotes the concatenation of $u$ and $v$ as natural numbers including zero, not the product of $u$ and $v$. We now define the set of coefficients of a polynomial $g \in \mathbb{K}[X]$ by $$C_{g} = \{\pi_{t}(g(X)) \; | \; 1 \leq t \leq \Omega_{g}\}$$
and remark that the problem of determining the maximum coefficient of the polynomials in $H_{\lambda}$ and $F_{\lambda}$ is then defined by
$$p > \max\left\{\left(\bigcup_{k=1}^{\max\{\lambda\}} \bigcup_{j=1}^{{n \choose r_{k}}^2} C_{h_{j,k}} \right) \coprod \left(\bigcup_{k=1}^{\max\{\lambda\}} \bigcup_{j=1}^{{n \choose r_{k} +1}^{2}} C_{f_{j,k}}\right)\right\}$$
As such, the problem of bounding the value of $p$ in $\mathbb{Z}_{p}$ is reduced to evaluating this maximum. In order to solve this problem we present a lemma.
\begin{Lemma}\label{n-1}
Let $X$ be an $n \times n$ matrix. Then for each $i,j \in \{1,...,n\}$ there are $(n-1)!$ occurrences of $x_{ij}$ in $\det(X)$.
\end{Lemma}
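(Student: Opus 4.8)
The plan is to use the Leibniz formula for the determinant, $\det(X) = \sum_{\sigma \in S_n} \operatorname{sgn}(\sigma) \prod_{k=1}^{n} x_{k,\sigma(k)}$, and to count how many of the $n!$ permutation terms contain the variable $x_{ij}$ as one of their factors. A term indexed by $\sigma$ contains $x_{ij}$ precisely when $\sigma(i) = j$. So the count reduces to counting permutations of $\{1,\dots,n\}$ with $\sigma(i) = j$ prescribed for one fixed pair $(i,j)$.

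First I would observe that fixing $\sigma(i) = j$ leaves $\sigma$ free to be any bijection from $\{1,\dots,n\}\setminus\{i\}$ to $\{1,\dots,n\}\setminus\{j\}$, and there are exactly $(n-1)!$ such bijections. Hence there are $(n-1)!$ monomials in the Leibniz expansion in which $x_{ij}$ appears. The only remaining subtlety is to check that no cancellation or merging occurs: since each permutation $\sigma$ contributes the monomial $\prod_k x_{k,\sigma(k)}$, and distinct permutations yield monomials with distinct multisets of column indices (the sequence $(\sigma(1),\dots,\sigma(n))$ is recoverable from the monomial together with its row labels), these $(n-1)!$ monomials are pairwise distinct and each has coefficient $\pm 1$, so none vanish. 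Therefore $x_{ij}$ genuinely occurs $(n-1)!$ times in $\det(X)$ as a polynomial in the $x_{uv}$.

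I would phrase the argument so that "occurrences of $x_{ij}$" is unambiguous: it means the number of monomial terms in the standard expansion of $\det(X)$ into $n!$ signed products that include the factor $x_{ij}$ — equivalently, since $\det(X)$ is multilinear in the entries and each entry appears to degree at most one in any term, it is the number of terms divisible by $x_{ij}$. This matches the intended use in bounding coefficients of the polynomials in $H_\lambda$ and $F_\lambda$, where minors of $X$ and $X^k$ are expanded and one needs control on how often a given variable shows up.

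The main obstacle is essentially bookkeeping rather than mathematics: making precise the notion of "occurrence" so that the statement is not vacuous or ambiguous, and confirming that the $(n-1)!$ contributing permutation terms remain distinct monomials after expansion (no collisions). Both points are handled by the observation that the Leibniz expansion is already a sum of distinct squarefree monomials indexed faithfully by $S_n$. Once that is granted, the count $(n-1)!$ is immediate from elementary enumeration.
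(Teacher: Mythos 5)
Your argument is essentially identical to the paper's primary proof: both invoke the Leibniz formula and count the $(n-1)!$ permutations $\sigma$ with $\sigma(i)=j$. Your added remark that the $n!$ Leibniz monomials are pairwise distinct (so nothing collapses) is a sensible clarification the paper omits, but it does not constitute a different approach.
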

\begin{proof}
We use the Leibniz formula for the determinant of an $n \times n$ matrix
$$\det(X) = \sum_{\sigma \in S_{n}} \text{sgn}(\sigma) \prod_{i=1}^{n} X_{i,\sigma(i)}$$
Let $x_{ij}$ be an arbitrary entry in $X$ and observe that for a fixed $\sigma \in S_{n}$ the entry $x_{i,\sigma(i)}$ appears exactly once in $\det(X)$. Then, since there are $(n-1)!$ permutations $\sigma \in S_{n}$ with the property that $\sigma(i) = j$ we have that $x_{ij}$ appears $(n-1)!$ times in $\det(X)$. Alternatively, since there are $n$ multiplicative terms in each additive term and $n!$ additive terms, there are $(n+1)!$ appearances of variables $x_{ij}$ for varying $i,j \in \{1,...,n\}$. Since each $x_{ij}$ appears an equal number of times in $\det(X)$ we have that each particular $x_{ij}$ occurs $(n+1)!/n^2 = (n-1)!$ times in $\det(X)$.
\end{proof}

With this fact we have the following corollary regarding embedding determinant equations in a polynomial ring with $p$-adic integer coefficients.

\begin{Corollary}
For an $n \times n$ matrix with entries $x_{ij}$ in a field $\mathbb{K}$, we have $\det(X) \in \mathbb{Z}_{p}[X]$ with $p>(n-1)!$.
\end{Corollary}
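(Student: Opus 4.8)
The plan is to extract the coefficients of $\det(X)$, regarded as an element of $\mathbb{K}[X]$, directly from the Leibniz expansion already used in the proof of Lemma \ref{n-1}, and then to feed the resulting bound into the $p$-adic reduction bookkeeping of the previous section. First I would write $\det(X) = \sum_{\sigma \in S_{n}} \text{sgn}(\sigma) \prod_{i=1}^{n} x_{i,\sigma(i)}$ and observe that two distinct permutations produce two distinct monomials, since the monomial $\prod_{i} x_{i,\sigma(i)}$ determines $\sigma$ as a function. Hence no cancellation or combination of terms occurs, and in the notation of the coefficient-projection framework every nonzero coefficient $c_{t}$ of $\det(X)$ satisfies $|c_{t}| = 1$; in particular $C_{\det(X)} \subseteq \{-1,0,1\}$. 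Lemma \ref{n-1}, which records that each variable $x_{ij}$ occurs exactly $(n-1)!$ times in $\det(X)$, is then invoked only to supply the crude uniform bound $|c_{t}| \le (n-1)!$ on all coefficients and to certify that $\det(X)$ is a genuine polynomial with integer coefficients.

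Next I would recall that the reduction homomorphism $\mathbb{K}[X] \to \mathbb{Z}_{p}[X]$ is defined on polynomials whose coefficients lie in $\mathbb{Z}_{p}$, and that it collapses a coefficient precisely when that coefficient is a nonzero multiple of $p$. Since every coefficient of $\det(X)$ has absolute value at most $(n-1)!$, choosing $p > (n-1)!$ guarantees that none of them is divisible by $p$; therefore $\det(X)$ lies in $\mathbb{Z}_{p}[X]$ and maps to a polynomial with the same monomial support, which is exactly the embedding asserted. This is the argument I would write out, closely paralleling the maximum-coefficient criterion stated before Lemma \ref{n-1} for the generators in $H_{\lambda}$ and $F_{\lambda}$.

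The main obstacle — really the only point that needs care — is the gap between ``number of occurrences of $x_{ij}$ in $\det(X)$'' as counted in Lemma \ref{n-1} and ``absolute value of a single coefficient of $\det(X)$'': the former tallies appearances distributed over many distinct monomials, so it badly overcounts any one coefficient. I would therefore be explicit that Lemma \ref{n-1} is used purely to extract the (very loose) bound $(n-1)!$, while the sharper statement that the coefficients are all $\pm 1$ — so that in fact $\det(X) \in \mathbb{Z}_{p}[X]$ for \emph{every} prime $p$ — follows for free from distinctness of the Leibniz monomials. A closing remark would note that $p > (n-1)!$ is thus far from optimal for $\det(X)$ in isolation, but is retained here because it is the bound produced by the uniform coefficient bookkeeping applied to all the defining equations of $\overline{\mathcal{O}_{\lambda}}$.
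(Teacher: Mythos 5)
Your proposal is correct, and it takes a genuinely better route than the paper's own proof. The paper argues that ``each $x_{ij}$ appears $(n-1)!$ times in $\det(X)$ and so there can be at most a coefficient of $(n-1)!$ for any $x_{ij}$,'' which is exactly the non sequitur you flagged: those $(n-1)!$ occurrences are spread over $(n-1)!$ \emph{distinct} monomials, so the occurrence count of a variable says nothing directly about the size of any single coefficient. Your key observation --- that the Leibniz monomial $\prod_{i} x_{i,\sigma(i)}$ determines $\sigma$, hence distinct permutations contribute distinct monomials and no combining of like terms ever occurs --- is what actually pins the coefficients of $\det(X)$ to $\pm 1$. From this you correctly conclude that $\det(X)$ lies in $\mathbb{Z}_{p}[X]$ for \emph{every} prime $p$, so the stated bound $p > (n-1)!$ is vacuously sufficient but far from sharp. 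The paper instead asserts the coefficient bound $(n-1)!$ without justification; your proof supplies the missing argument, makes explicit that Lemma~\ref{n-1} is only being used to produce a loose uniform constant, and correctly isolates the one conceptual gap as the conflation of ``occurrence count'' with ``coefficient size.'' In short: same conclusion, but your reasoning is sound where the paper's is not, and you additionally establish the stronger unconditional statement.
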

\begin{proof}
By Lemma \ref{n-1}, each $x_{ij}$ appears $(n-1)!$ times in $\det(X)$ and so there can be at most a coefficient of $(n-1)!$ for any $x_{ij}$ which implies that the image of $\det(X)$ is invariant under the map $\mathbb{K} \longrightarrow \mathbb{Z}_{p}$ with $p > (n-1)!$. Hence, $\det(X) \in \mathbb{Z}_{p}[X]$ for $p > (n-1)!$.
\end{proof}

Since the equations $h_{j,k} \in H_{\lambda}$ are expressed in terms of $r_{k} \times r_{k}$ minors and the equations in $f_{j,k} \in F_{\lambda}$ are expressed in terms of $(r_{k} + 1) \times (r_{k} + 1)$ minors, we immediately have that 
$$\max\left\{\bigcup_{k=1}^{\max\{\lambda\}} \bigcup_{j=1}^{{n \choose r_{k} +1}^{2}} C_{f_{j,k}} \right\} > \max\left\{\bigcup_{k=1}^{\max\{\lambda\}} \bigcup_{j=1}^{{n \choose r_{k}}^2} C_{h_{j,k}}\right\}$$
and that $$\max\{r_{k} \; \big| \; 1 \leq k \leq \max\{\lambda\}\}! > \max\left\{\bigcup_{k=1}^{\max\{\lambda\}} \bigcup_{j=1}^{{n \choose r_{k} +1}^{2}} C_{f_{j,k}} \right\}$$ since each $f_{j,k}$ is an $(r_{k} + 1) \times (r_{k}+1)$ determinant function with the property by Corollary 1 that it embeds in $\mathbb{Z}_{p}[X]$ with $p > (r_{k} + 1 -1)! = r_{k}!$. Therefore, we can bound the value of $p$ by
$$p > \max\{r_{k} \; \big| \; 1 \leq k \leq \max\{\lambda\}\}!$$
where $r_{k} = \rank(X^{k}) = \sum_{i=1}^{l} f^{k}(\lambda_{i})$ and
$$f(x)=
\begin{cases}
x-1 \;\;\;\; \text{if} \; x > 0 \\
0 \;\;\;\;\;\;\;\;\;\; \text{if} \; x \leq 0
\end{cases}$$
Future work will focus on the explicit construction of local weak N\'{e}ron models for nilpotent orbit varieties,
applying the Greenberg transform to these models, thus producing pro-schemes over finite fields with a remarkable property:
the set of rational points on these pro-schemes is canonically identified with the set of rational points on nilpotent
orbit varieties appearing in Lie algebras over local fields and global fields.
\begin{bibdiv}
\begin{biblist}

\bib{NT}{article}{
   author={Nakano, Daniel K.},
   author={Tanisaki, Toshiyuki},
   title={On the realization of orbit closures as support varieties},
   journal={J. Pure Appl. Algebra},
   volume={206},
   date={2006},
   number={1-2},
   pages={66--82},
   issn={0022-4049},
   review={\MR{2220082 (2007b:17029)}},
   doi={10.1016/j.jpaa.2005.04.014},
}
		
\bib{SW}{article}{
   author={Shimozono, Mark},
   author={Weyman, Jerzy},
   title={Bases for coordinate rings of conjugacy classes of nilpotent
   matrices},
   journal={J. Algebra},
   volume={220},
   date={1999},
   number={1},
   pages={1--55},
   issn={0021-8693},
   review={\MR{1713457 (2001h:20066)}},
   doi={10.1006/jabr.1999.7969},
}
	
\bib{Tanisaki}{article}{
   author={Tanisaki, Toshiyuki},
   title={Defining ideals of the closures of the conjugacy classes and
   representations of the Weyl groups},
   journal={T\^ohoku Math. J. (2)},
   volume={34},
   date={1982},
   number={4},
   pages={575--585},
   issn={0040-8735},
   review={\MR{685425 (84g:14049)}},
   doi={10.2748/tmj/1178229158},
}

\bib{Weyman}{article}{
   author={Weyman, J.},
   title={The equations of conjugacy classes of nilpotent matrices},
   journal={Invent. Math.},
   volume={98},
   date={1989},
   number={2},
   pages={229--245},
   issn={0020-9910},
   review={\MR{1016262 (91g:20070)}},
   doi={10.1007/BF01388851},
}
		
\bib{FH}{book}{
   author={Fulton, W.},
   author={Harris, J.},
   title={Representation theory: a first course},
   volume={129},
   year={1991},
   publisher={Springer}
}

\bib{KLMW}{book}{
   author={Kreiman, V.},
   author={Lakshmibai, V.},
   author={Magyar, P.},
   author={Weyman, J.},
   title={On ideal generators for affine Schubert varieties},
   year={2007},
   publisher={Alpha Science Int'l Ltd}
}

\bib{colling}{book}{
   author={Collingwood, D.H.},
   author={McGovern, W.M.},
   title={Nilpotent Orbits in Semisimple Lie Algebras},
   year={1993},
   publisher={Van Nostrand Reinhold}
}

\bib{neronmodel}{book}{
author={Bosch, S.},
author={L\"{u}tkebohmert, W.},
author={Raynaud, M.},
title ={N\'{e}ron Models},
year={1990},
publisher={Springer}
}

\end{biblist}
\end{bibdiv}

\end{document}